\documentclass[12pt]{amsart}

\setlength{\textwidth}{150mm}
\setlength{\textheight}{220mm}
\setlength{\oddsidemargin}{5.5mm}
\setlength{\evensidemargin}{5.5mm}

\usepackage{amssymb}
\usepackage{amscd}

\title{Rational points and Galois points for a plane curve over a finite field}
\author{Satoru Fukasawa}

\subjclass[2000]{Primary 14H50; Secondary 12F10, 14G05}
\keywords{Galois point, plane curve, rational point, finite field}
\address{Department of Mathematical Sciences, Faculty of Science, Yamagata University, Kojirakawa-machi 1-4-12, Yamagata 990-8560, Japan}
\email{s.fukasawa@sci.kj.yamagata-u.ac.jp} 
\thanks{The author was partially supported by JSPS KAKENHI Grant Number 25800002.}

%\date{2014, 7/22, version 3}

\newtheorem{theorem}{Theorem}

\newtheorem{lemma}{Lemma}
\newtheorem{fact}{Fact}
\newtheorem{problem}{Problem} 

\theoremstyle{definition}

\begin{document}
\begin{abstract} 
We study the relationship between rational points and Galois points for a plane curve over a finite field. 
It is known that the set of Galois points coincides with that of rational points of the projective plane if the curve is the Hermitian, Klein quartic or Ballico-Hefez curves. 
We propose a problem: {\it Does the converse hold true?} 
When the curve of genus zero or one has a rational point, we will have an affirmative answer. 
\end{abstract}
\maketitle

\section{Introduction}  
We study the relationship between rational points and Galois points for a plane curve over a finite field. 

We recall the definition of Galois point, which was given by H. Yoshihara in 1996 (\cite{miura-yoshihara, yoshihara}). 
Let $C \subset \Bbb P^2$ be an irreducible plane curve of degree $d \ge 4$ over an algebraically closed field $K$ of characteristic $p \ge 0$ and let $K(C)$ be its function field. 
A point $P \in \Bbb P^2$ is said to be Galois for $C$, if the function field extension $K(C)/\pi_P^*K(\Bbb P^1)$ induced by the projection $\pi_P: C \dashrightarrow \Bbb P^1$ from $P$ is Galois. 
We denote by $\Delta(C)$ the set of all Galois points on the projective plane. 

Let $C$ be a plane curve over a finite field $\Bbb F_{q_0}$ which is irreducible over the algebraic closure $\overline{\Bbb F}_{q_0}$. 
We consider Galois points over $\overline{\Bbb F}_{q_0}$. 
Fukasawa and Hasegawa \cite{fukasawa2, fukasawa-hasegawa} showed that $\sharp\Delta(C) <\infty$ except for certain explicit examples.  
Therefore, we assume that $\sharp\Delta(C) <\infty$ here. 
Then,
$$ \Delta(C) \subset \Bbb P^2(\Bbb F_q); \ \mbox{ for infinitely many} \ q \ge q_0. $$
{\it When does $\Delta(C)=\Bbb P^2(\Bbb F_q)$ hold?}
Summarizing the results of Homma \cite{homma} and Fukasawa \cite{fukasawa1, fukasawa4}, we have the following very interesting theorem. 

\begin{fact}[Homma, Fukasawa] \label{examples} 
\begin{itemize}
\item[(1)] For the Hermitian curve $H_{\sqrt{q}+1}$: 
$$X^{\sqrt{q}}Z+XZ^{\sqrt{q}}-Y^{\sqrt{q}+1}=0,$$ $\Delta(H_{\sqrt{q}+1})=\Bbb P^2(\Bbb F_{q})$. 
\item[(2)] For the Klein quartic curve $K_4$: 
$$ (X^2+XZ)^2+(X^2+XZ)(Y^2+YZ)+(Y^2+YZ)^2+Z^4=0$$
in $p=2$, $\Delta(K_4)=\Bbb P^2(\Bbb F_2)$. 
\item[(3)] For the Ballico-Hefez curve $B_{q+1}$, which is the image of the morphism 
$$ \Bbb P^1 \rightarrow \Bbb P^2; \ (s:t) \mapsto (s^{q+1}:(s+t)^{q+1}:t^{q+1}), $$
$\Delta(B_{q+1})=\Bbb P^2(\Bbb F_q)$. 
\end{itemize}
\end{fact}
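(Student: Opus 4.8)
The plan is to treat the three curves separately — each is the subject of one of the cited papers — but in every case through the same mechanism, the automorphism-group description of Galois points. For a plane curve $C$ with normalization $\tilde C$ and a point $P\in\Bbb P^2$, write $\pi_P\colon\tilde C\to\Bbb P^1$ for the projection from $P$ and $\deg\pi_P$ for its degree (so $\deg\pi_P=\deg C-\mathrm{mult}_P C$). Then $P$ is Galois exactly when there is a subgroup $G_P\le\mathrm{Aut}(\tilde C)$ with $|G_P|=\deg\pi_P$ stabilizing every fibre of $\pi_P$; and at a smooth point $P$ (hence everywhere when $C$ is smooth) such a $G_P$ is induced by the restriction to $C$ of a subgroup of $PGL(3,\overline{\Bbb F}_{q_0})$ that fixes $P$ and stabilizes every line through $P$, i.e. of a group of perspectivities with centre $P$. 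Proving $\Delta(C)=\Bbb P^2(\Bbb F_q)$ thus splits into two tasks: (i) for each $P\in\Bbb P^2(\Bbb F_q)$, exhibit inside $\mathrm{Aut}(\tilde C)$ such a ``pencil-preserving'' subgroup of order $\deg\pi_P$; and (ii) conversely, show that the centre of any pencil-preserving subgroup of $\mathrm{Aut}(\tilde C)$ of order $\deg\pi_P$ must be $\Bbb F_q$-rational.

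For the Hermitian curve $H_{\sqrt q+1}$ (smooth of degree $r+1$ with $r=\sqrt q$) one uses $\mathrm{Aut}(H_{\sqrt q+1})=PGU(3,r)$ together with the fact that the $\Bbb F_q$-rational points of $\Bbb P^2$ are precisely those isotropic or anisotropic for the defining Hermitian form. For (i): an anisotropic $\Bbb F_q$-point $P$ is the centre of a cyclic unitary homology of order $r+1$ whose axis is the polar line of $P$, giving $G_P\cong\Bbb Z/(r+1)\Bbb Z$ of order $\deg\pi_P=r+1$; an isotropic $\Bbb F_q$-point $P$ (i.e. $P\in H(\Bbb F_q)$) is fixed by the elementary abelian group of unitary elations with centre $P$ and axis the tangent line at $P$, of order $r=\deg\pi_P$. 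For (ii): a pencil-preserving subgroup of $PGU(3,r)$ is a group of perspectivities with a common centre, so one of order $r+1$ is generated by a homology of order $r+1$ — forcing its centre to be an anisotropic $\Bbb F_q$-point — while one of order $r$ consists of unitary elations — forcing its centre to be an isotropic $\Bbb F_q$-point; hence there is no further Galois point.

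For the Klein quartic $K_4$ in characteristic $2$ one first computes $\mathrm{Aut}(K_4)$ and its image in $PGL(3,\overline{\Bbb F}_2)$; this finite group is rich enough that each of the seven points of $\Bbb P^2(\Bbb F_2)$ is the centre of a pencil-preserving subgroup of order $3$ (for $P\in K_4$, where $\deg\pi_P=3$) or of order $4$ (for $P\notin K_4$), which settles (i), and — the group being small and explicit — a direct inspection of its perspectivity-type subgroups shows no other point arises as such a centre, settling (ii). For the Ballico--Hefez curve $B_{q+1}$ (rational, of degree $q+1$, singular) the normalization is $\Bbb P^1$, and the Möbius transformations of the parameter line that descend to projective transformations of $B_{q+1}$ form a copy of $PGL(2,q)\subset PGL(3,\overline{\Bbb F}_{q_0})$; projection from an $\Bbb F_q$-point then corresponds to a Galois cover of $\Bbb P^1$ whose deck group is a subgroup of this $PGL(2,q)$ — a unipotent subgroup of order $q$ at a smooth point of the curve, a cyclic maximal torus off the curve or at a node, and so on — and, as for the Klein quartic, one verifies that the centres of all such subgroups are exactly the points of $\Bbb P^2(\Bbb F_q)$, and no others.

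The main obstacle throughout is direction (ii): a priori one knows only that $\Delta(C)\subset\Bbb P^2(\Bbb F_{q'})$ for infinitely many $q'$, so the genuine work is to pin down the exact field of definition of the centre of every pencil-preserving subgroup of $\mathrm{Aut}(\tilde C)$ — which need not coincide with the field of definition of the subgroup itself — and this drives one into the subgroup structure of $PGU(3,r)$, of $\mathrm{Aut}(K_4)$, and of $PGL(2,q)$, and into controlling when a pencil of lines fixed by such a subgroup can fail to have a rational centre. A further technical point for $B_{q+1}$ is its singularity: the passage between Galois points and linear pencil-preserving subgroups must be carried out on the normalization at the singular points, where $\deg\pi_P$ drops and the deck groups have a different shape from those at the smooth points.
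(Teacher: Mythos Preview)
The paper does not prove this statement: it is recorded as a \emph{Fact}, attributed to Homma \cite{homma} and Fukasawa \cite{fukasawa1,fukasawa4}, and is simply quoted as input to the rest of the paper. There is therefore no ``paper's own proof'' to compare against; the only relevant question is whether your outline is a viable route to the content of those cited references.

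Your framework --- identify $P\in\Delta(C)$ with the existence of a pencil-preserving subgroup $G_P\le\mathrm{Aut}(\tilde C)$ of the right order, then analyse such subgroups inside $PGU(3,\sqrt q)$, $\mathrm{Aut}(K_4)$, and $PGL(2,q)$ respectively --- is indeed the mechanism used in the original papers, and your treatment of the Hermitian case (homologies at anisotropic points, elations at isotropic points) is essentially Homma's argument. So as a \emph{plan} this is on target.

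That said, what you have written is explicitly a plan rather than a proof: you flag direction~(ii) as ``the main obstacle'' and then describe it only as ``driving one into the subgroup structure'' of the relevant groups, without carrying out that analysis. For the Klein quartic and the Ballico--Hefez curve your part~(i) is also only gestured at (``rich enough that\ldots'', ``and so on''). In particular, for $B_{q+1}$ you have not actually computed $\deg\pi_P$ at the various types of $\Bbb F_q$-points (smooth points of $C$, nodes, points off $C$) nor matched each to a concrete subgroup of $PGL(2,q)$, and the exclusion of non-$\Bbb F_q$-rational Galois points requires more than the generic remark that $\Delta(C)\subset\Bbb P^2(\Bbb F_{q'})$ for large $q'$. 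None of this is wrong, but at present it is a table of contents for a proof rather than the proof itself.
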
 

We propose the following problem: 

\begin{problem} 
Let $C$ be a plane curve over $\Bbb F_q$. 
Assume that $\Delta(C)=\Bbb P^2(\Bbb F_q)$. 
Then, is it true that $C$ is projectively equivalent to the Hermitian, Klein quartic or Ballico-Hefez curve?
\end{problem}

When $C$ is smooth, it is already known that the answer is affirmative (\cite{fukasawa3}). 
Therefore, we consider singular curves. 
Let $C_{\rm sm}$ be the smooth locus of $C$. 
When $C_{\rm sm}(\Bbb F_q) \ne \emptyset$ and the geometric genus of $C$ is zero or one, we have an affirmative answer, as follows. 

\begin{theorem} 
Assume that the geometric genus of $C$ is zero or one.  
Then, $C_{\rm sm}(\Bbb F_q) \ne \emptyset$ and $\Delta(C)=\Bbb P^2(\Bbb F_q)$ if and only if $C$ is projectively equivalent to the Ballico-Hefez curve $B_{q+1}$ (over $\Bbb F_q$). 
\end{theorem}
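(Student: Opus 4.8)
The implication ``$C$ is projectively equivalent to $B_{q+1}$ $\Longrightarrow$ $C_{\rm sm}(\Bbb F_q)\neq\emptyset$ and $\Delta(C)=\Bbb P^2(\Bbb F_q)$'' is exactly Fact~\ref{examples}(3) (note that $(1:1:0)$ is a smooth $\Bbb F_q$-point of $B_{q+1}$), so the plan is to treat only the converse. Assume $C$ has degree $d\ge 4$ and geometric genus $g\in\{0,1\}$, that $C_{\rm sm}(\Bbb F_q)\neq\emptyset$, and that every point of $\Bbb P^2(\Bbb F_q)$ is Galois for $C$. Let $\phi:\widetilde C\to C\subset\Bbb P^2$ be the normalization, so $\widetilde C\cong\Bbb P^1$ if $g=0$ and $\widetilde C$ is elliptic if $g=1$. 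For a Galois point $P$ the projection $\pi_P$ lifts to a finite Galois morphism $\widetilde\pi_P:\widetilde C\to\Bbb P^1$ with group $G_P\subset\mathrm{Aut}(\widetilde C)$ of order $d$ if $P\notin C$ and of order $d-1$ if $P\in C_{\rm sm}$. The elementary observation I will use repeatedly is that, $\widetilde\pi_P$ being Galois, every fibre is a single $G_P$-orbit; hence for a line $\ell$ through $P$ the divisor $\phi^{*}(\ell\cap C)$ on $\widetilde C$ has all of its coefficients equal, which, applied to $\ell=T_{P_0}C$ at a smooth $\Bbb F_q$-point $P_0$, forces $\bigl(I_{P_0}(C,T_{P_0}C)-1\bigr)\mid(d-1)$ together with strong tangency constraints along $T_{P_0}C$.

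The first and crucial task is to prove $g=0$ and $d=q+1$. Since $\sharp\Delta(C)=q^2+q+1$, the curve $C$ is none of the exceptional curves carrying infinitely many Galois points, and the structural results of Fukasawa and Hasegawa \cite{fukasawa2, fukasawa-hasegawa} for curves of genus zero or one, together with the Hasse--Weil bound $\sharp\widetilde C(\Bbb F_q)\le q+1+2g\sqrt q$ and the estimate $\binom{d-1}{2}$ for the number of singular points of $C$, restrict $d$ to a range comparable to $q$. To pin down the exact value I would combine the tangency analysis at the inner Galois point $P_0\in C_{\rm sm}(\Bbb F_q)$ (forcing $\widetilde\pi_{P_0}$ to be wildly ramified of $p$-power degree $d-1=q=p^{e}$ with group $(\Bbb Z/p)^{e}$, and $T_{P_0}C$ to meet $C$ only at $P_0$) with the corresponding analysis at the many outer Galois points (forcing $\widetilde\pi_P$ to be tame cyclic of degree $d=q+1$) --- precisely the pattern exhibited by $B_{q+1}$. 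The genus-one case is excluded here as well: if $\widetilde C$ were elliptic, each $G_P$ would meet the translation subgroup $\widetilde C(\overline{\Bbb F}_q)$ in a finite subgroup $G_P^{0}$ of order $\asymp q$, two distinct outer Galois points $P_1,P_2$ would give $G_{P_1}\cap G_{P_2}=1$ and hence $G_{P_1}^{0}\cap G_{P_2}^{0}=1$ inside the torsion of $\widetilde C$, and no elliptic curve admits $q^{2}+q+1$ subgroups of such order with pairwise trivial intersection (a prime $p'\neq p$ dividing the order would produce $q^2+q+1$ distinct subgroups of $\widetilde C[p']\cong(\Bbb Z/p')^2$ of order $p'$, impossible).

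Granting $g=0$ and $d=q+1$, I reconstruct $C$ explicitly. As $\sharp\Delta(C)=q^{2}+q+1$ while $\sharp C(\Bbb F_q)$ is only $O(q)$, there exist four outer Galois points in general position; moving them over $\Bbb F_q$ to $(1:0:0),(0:1:0),(0:0:1),(1:1:1)$ and writing $C:f(X,Y,Z)=0$, the projection from $(0:0:1)$ is a Galois covering of degree $q+1$, and since $q+1$ is prime to $p$ its group is cyclic or dihedral; this constrains $f$, viewed as a polynomial in $Z$ over $\overline{\Bbb F}_q[X,Y]$, to a Kummer-type shape, and imposing the analogous conditions at the remaining three vertices (and, if necessary, at a few further rational points) over-determines $f$, in particular ruling out the dihedral possibility. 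Matching the (necessarily consistent) system with the known solution leaves $f$ equal, up to the $\Bbb F_q$-projective transformations preserving the configuration, to the defining equation of the image of $(s:t)\mapsto(s^{q+1}:(s+t)^{q+1}:t^{q+1})$; one finally checks that this identification is defined over $\Bbb F_q$.

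The principal obstacle is the degree determination: upgrading ``many Galois points force $d$ comparable to $q$'' to the sharp equality $d=q+1$ uniformly in $q$, including the small-$q$ and small-characteristic cases where the classification of finite subgroups of $\mathrm{PGL}_2(\overline{\Bbb F}_q)$ has the sporadic members $A_4,S_4,A_5$ and where wild ramification must be analysed by hand, and simultaneously ruling out every elliptic $\widetilde C$. Once $d=q+1$ is known, the remaining step is a delicate but essentially routine rigidity computation with Galois coverings of $\Bbb P^1$ by $\Bbb P^1$.
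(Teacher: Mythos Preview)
Your proposal has two genuine gaps, and both are where the actual work lies.

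\textbf{Excluding genus one.} Your argument via translation subgroups is circular: you have not yet bounded $d$ in terms of $q$, so the claim that $G_P^{0}=G_P\cap\widetilde C(\overline{\Bbb F}_q)$ has order $\asymp q$ is unjustified, and nothing prevents $G_P$ from lying entirely outside the translations. The paper's argument is much cleaner and avoids any knowledge of $d$: once you know (your own observation, the paper's Lemma~\ref{total flex}) that $T_{P_0}C\cap C=\{P_0\}$ for an inner $\Bbb F_q$-Galois point $P_0$, pick two further $\Bbb F_q$-points $P',P''$ on $T_{P_0}C$; they are \emph{outer} Galois points, and the Galois groups $G_{P_0},G_{P'},G_{P''}$ all fix the branch over $P_0$, hence all embed in $\mathrm{Aut}(E,P_0)$, whose order divides $24$. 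Thus $d(d-1)\mid 24$, forcing $d=4$, $p\in\{2,3\}$, and a short Sylow argument in $\mathrm{Aut}(E,P_0)$ finishes it.

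\textbf{Determining $d$.} You correctly flag this as the principal obstacle, but your proposed route (four outer points in general position, Kummer-shape analysis of the defining polynomial) does not get there; in particular nothing in your outline explains why $d-1$ should be a $p$-power, let alone equal to $q$. The paper's decisive idea, which you are missing, is to choose coordinates using \emph{inner} rather than outer Galois points. Take three smooth $\Bbb F_q$-points $P_1,P_2,P_3$ (there are at least $q+1$); their tangent lines form a genuine triangle (Lemma~\ref{distribution}), and since each tangent meets $C$ in a single point the normalization is forced to be
\[
(s:t)\longmapsto(s^{d}:(s+t)^{d}:t^{d})
\]
for the \emph{still unknown} $d$. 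Now everything is explicit: projecting from the inner Galois point $P_3=(1:1:0)$ and differentiating $(s+1)^d-s^d$ shows $p\mid(d-1)$; writing $d-1=q'\ell$ with $p\nmid\ell$ and computing the tangent line at a general branch shows every tangency order is at least $q'$, which is incompatible with Fact~\ref{rational-ramification}(2) unless $\ell=1$; finally $q'=q$ because the resulting curve is $B_{q'+1}$ and $\Delta(B_{q'+1})=\Bbb P^2(\Bbb F_{q'})$. In short, the tangent-triangle coordinates make the reconstruction and the degree determination happen in a single explicit calculation, whereas your outer-point approach separates them and leaves the hard step unresolved.
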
  

See \cite{ballico-hefez, fhk} for other properties of the Ballico-Hefez curves.

\section{Preliminaries}
Let $\Bbb F_q$ be a finite field, let $p$ be the characteristic, and let $C$ be a plane curve of degree $d \ge 4$ defined over $\Bbb F_q$ which is irreducible over the algebraic closure $\overline{\Bbb F}_q$. 
We denote by $\pi: \hat{C} \rightarrow C$ the normalization. 
We can take $\hat{C}$ and $\pi$ which are defined over $\Bbb F_q$. 

Let $(X:Y:Z)$ be a system of homogeneous coordinates of the projective plane $\Bbb P^2$.  
We denote by $S(\Bbb F_q)$ the set of all $\Bbb F_q$-points of a subset $S \subset \Bbb P^2$. 
For distinct points $P, R$, $\overline{PR}$ means the line passing through them.  
If $P \in C_{\rm sm}$, $T_PC \subset \Bbb P^2$ is the (projective) tangent line at $P$. 
For a projective line $\ell \subset \Bbb P^2$ and a point $P \in C \cap \ell$, $I_P(C, \ell)$ means the intersection multiplicity of $C$ and $\ell$ at $P$.  
If $\hat{P} \in \hat{C}$ is a non-singular branch, i.e. there exists a line defined by $h=0$ with ${\rm ord}_{\hat{P}}\pi^*h=1$, then there exists a unique tangent line at $P=\pi(\hat{P})$ defined by $h_{\hat{P}}=0$ such that ${\rm ord}_{\hat{P}}\pi^*h_{\hat{P}} \ge 2$.  
Let $\pi_P: C \dashrightarrow \Bbb P^1; R \mapsto \overline{PR}$ be the projection from a point $P \in \Bbb P^2$. 
We write $\hat{\pi}_P=\pi_P \circ \pi$. 
We denote by $e_{\hat{R}}$ the ramification index of $\hat{\pi}_P$ at $\hat{R} \in \hat{C}$. 
If $R=\pi(\hat{R}) \in C_{\rm sm}$, then we denote $e_{\hat{R}}$ also by $e_R$.   
It is not difficult to check the following.  

\begin{fact} \label{index}
Let $P \in \Bbb P^2$ and let $\hat{R} \in \hat{C}$ with $\pi(\hat{R})=R \ne P$. 
Then for $\hat{\pi}_P$ we have the following. 
\begin{itemize}
\item[(1)] If $P \in C_{\rm sm}$, then $e_P=I_P(C, T_PC)-1$.  
\item[(2)] If $h$ is a linear polynomial defining $\overline{RP}$, then $e_{\hat{R}}={\rm ord}_{\hat{R}}\pi^*h$. 
In particular, if $R$ is smooth, then $e_R =I_R(C, \overline{PR})$.  
\end{itemize} 
\end{fact}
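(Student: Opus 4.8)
The plan is to compute both ramification indices uniformly as the order of vanishing of the pullback of a local uniformizer on $\Bbb P^1$ at the appropriate image point, after realizing $\hat{\pi}_P$ explicitly through the pencil of lines through $P$. Concretely, I would fix two linearly independent linear forms $\ell_0, \ell_1$ vanishing at $P$, so that the lines through $P$ are exactly the members $\{s\ell_0+t\ell_1=0\}$ of the pencil parametrized by $(s:t)\in\Bbb P^1$, and the projection is given on $\hat{C}$ by $\hat{R}\mapsto(\pi^*\ell_0(\hat{R}):\pi^*\ell_1(\hat{R}))$. Since $\hat{C}$ is a smooth curve, this is a genuine morphism even where $\pi_P$ is only rational on $C$, and by definition $e_{\hat{R}}$ is the order at $\hat{R}$ of the pullback of a uniformizer of $\Bbb P^1$ at $\hat{\pi}_P(\hat{R})$.

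For part (2), I would take $\ell_0=h$ (defining $\overline{RP}$) and choose $\ell_1=g$ to be a linear form vanishing at $P$ with $g(R)\neq 0$; such a $g$ exists precisely because $P\neq R$. Then $\hat{\pi}_P(\hat{R})=(h(R):g(R))=(0:1)$, and near $(0:1)$ the function $h/g$ is a uniformizer, so $e_{\hat{R}}={\rm ord}_{\hat{R}}\pi^*(h/g)={\rm ord}_{\hat{R}}\pi^*h-{\rm ord}_{\hat{R}}\pi^*g$. The second term vanishes because $\pi^*g$ is a unit at $\hat{R}$ (as $g(R)\neq 0$), giving $e_{\hat{R}}={\rm ord}_{\hat{R}}\pi^*h$. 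When $R$ is smooth, $\pi$ is a local isomorphism at $\hat{R}$, so ${\rm ord}_{\hat{R}}\pi^*h$ is the order of vanishing of $h|_C$ at $R$, which is by definition $I_R(C,\overline{PR})$.

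For part (1), $P$ is smooth, so there is a unique $\hat{P}$ over $P$ and $\hat{\pi}_P$ is a morphism there. I would normalize coordinates so that $P$ is the origin of the chart $Z\neq 0$ and $T_PC=\{Y=0\}$, and take $\ell_0=X$, $\ell_1=Y$, so that the pencil point $(1:0)$ corresponds exactly to $T_PC$. Writing $x=\pi^*(X/Z)$ and $y=\pi^*(Y/Z)$, both vanish at $\hat{P}$ with ${\rm ord}_{\hat{P}}x=1$ (the line $\{X=0\}$ is transversal) and ${\rm ord}_{\hat{P}}y=I_P(C,T_PC)$ (the tangent line). Hence $y/x\to 0$, so $\hat{\pi}_P(\hat{P})=(1:0)$, near which $Y/X$ pulls back to the uniformizer $y/x$, and therefore $e_P={\rm ord}_{\hat{P}}(y/x)={\rm ord}_{\hat{P}}y-{\rm ord}_{\hat{P}}x=I_P(C,T_PC)-1$.

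The only genuinely delicate point is correctly identifying the image points $\hat{\pi}_P(\hat{R})$ and $\hat{\pi}_P(\hat{P})$ together with the right uniformizer there; in particular, for part (1) one must recognize that the rational map $\pi_P$, undefined at $P$, extends over the normalization to send $\hat{P}$ to the tangent direction $T_PC$, which is what makes the image $(1:0)$ and forces the $-1$. Once the pencil description and the uniformizers are fixed, both statements reduce to elementary manipulation of orders of vanishing, using only that ${\rm ord}_{\hat{R}}$ is a valuation and that $\pi$ is a local isomorphism at smooth points.
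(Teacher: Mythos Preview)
Your argument is correct. The paper does not actually prove this fact; it merely precedes the statement with ``It is not difficult to check the following'' and leaves the verification to the reader. Your proposal supplies exactly the routine verification the paper omits, via the standard device of realizing $\hat{\pi}_P$ as the morphism $(\pi^*\ell_0:\pi^*\ell_1)$ for a basis $\ell_0,\ell_1$ of linear forms through $P$ and then reading off ramification indices as orders of vanishing of the pullback of a local coordinate on $\Bbb P^1$. Both parts are handled cleanly: in (2) the choice $\ell_0=h$, $\ell_1=g$ with $g(R)\neq 0$ makes $\pi^*g$ a unit at $\hat R$, and in (1) the normalization $T_PC=\{Y=0\}$ with $\{X=0\}$ a transversal line through $P$ gives ${\rm ord}_{\hat P}x=1$ and ${\rm ord}_{\hat P}y=I_P(C,T_PC)$, whence the $-1$. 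There is nothing to compare against in the paper beyond this.
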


For a Galois covering $\theta:C \rightarrow C'$ between smooth curves, the following fact is useful (see \cite[III. 7.1, 7.2 and 8.2]{stichtenoth}). 
\begin{fact} \label{Galois covering} 
Let $\theta: C \rightarrow C'$ be a Galois covering with Galois group $G$. 
We denote by $G(P)$ the stabilizer subgroup of $P \in C$. 
Then, we have the following. 
\begin{itemize}
\item[(1)] If $\theta(P)=\theta(Q)$, then $e_P=e_Q$. 
\item[(2)] The order $|G(P)|$ of $G(P)$ is equal to $e_P$ for each point $P \in C$. 
\end{itemize} 
\end{fact}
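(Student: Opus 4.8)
The plan is to recast $\theta$ as a Galois extension of function fields and to extract both assertions from the transitivity of the Galois action on fibres together with the fundamental degree identity. Put $F=K(C)$ and $F'=K(C')$, so that $\theta$ corresponds to a finite separable Galois extension $F/F'$ with $\mathrm{Gal}(F/F')=G$; points of $C$ and $C'$ correspond to places of $F$ and $F'$, and each $\sigma\in G$ acts as an automorphism of $C$ with $\theta\circ\sigma=\theta$. The decisive structural input I would establish first is \emph{transitivity}: if $\theta(P)=\theta(Q)$, then there is $\sigma\in G$ with $\sigma(P)=Q$. I expect this to be the main obstacle, since the remaining steps are formal once it is in hand; the standard route is a norm argument. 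If $Q$ lay outside the $G$-orbit of $P$, the approximation theorem would furnish $x\in F$ vanishing at $Q$ but a unit at every conjugate $\sigma(P)$, and then $y=N_{F/F'}(x)=\prod_{\sigma\in G}\sigma(x)\in F'$ would have order zero at $P$ but positive order at $Q$. Since any nonzero $y\in F'$ satisfies $v_P(y)=e_P\,v_{P_0}(y)$ and $v_Q(y)=e_Q\,v_{P_0}(y)$ with $P_0=\theta(P)$, its orders at $P$ and $Q$ must vanish simultaneously, which is the desired contradiction.

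For part (1), granting transitivity, pick $\sigma\in G$ with $\sigma(P)=Q$. As $\sigma$ is an isomorphism of $C$ satisfying $\theta=\theta\circ\sigma$, it sends a local parameter of $\theta$ at $P$ to one at $Q$ and therefore preserves the local degree of the map; hence $e_Q=e_{\sigma(P)}=e_P$. In particular, all ramification indices within a single fibre of $\theta$ agree.

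For part (2), I would combine the orbit--stabiliser theorem with the fundamental identity. By transitivity the $G$-orbit of $P$ is exactly the fibre $\theta^{-1}(\theta(P))$, so orbit--stabiliser gives $|G|=|\theta^{-1}(\theta(P))|\cdot|G(P)|$. On the other hand, separability of $F/F'$ yields $\deg\theta=\sum_{Q\in\theta^{-1}(\theta(P))}e_Qf_Q$, where $f_Q$ denotes the residue degree; since $K$ is algebraically closed we have $f_Q=1$ for every $Q$, and by part (1) each $e_Q$ equals $e_P$, so $\deg\theta=|\theta^{-1}(\theta(P))|\cdot e_P$. Because $\deg\theta=[F:F']=|G|$, equating the two expressions and cancelling the positive integer $|\theta^{-1}(\theta(P))|$ gives $|G(P)|=e_P$.

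Finally, I note that the only delicate point in characteristic $p>0$ is that the ramification may be wild; but wildness changes only the numerical value of $e_Q$ and affects neither the transitivity of $G$ nor the fundamental identity, both of which rest solely on separability, which is automatic since $F/F'$ is Galois. Thus the argument goes through uniformly, and the proposal simply organises the results of \cite[III.7.1, 7.2 and 8.2]{stichtenoth} into the two stated forms.
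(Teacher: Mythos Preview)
Your argument is correct; the transitivity via the norm trick, the automorphism transport of ramification indices, and the orbit--stabiliser/fundamental-identity count are exactly the standard proof. The paper itself gives no independent argument for this statement---it is recorded as a Fact with the bare citation \cite[III.7.1, 7.2 and 8.2]{stichtenoth}---so your proposal is simply a faithful unpacking of that reference and there is nothing further to compare.
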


We mention properties of Galois covering between rational curves.  
The following fact is a corollary of the classification of finite subgroups of ${\rm PGL}(2, K)$ (see, for example, \cite[Theorem 11.91]{hkt}). 

\begin{fact} \label{rational-ramification} 
Let $\theta: \Bbb P^1 \rightarrow \Bbb P^1$ be a Galois covering of degree $d \ge 3$ and let $d=q\ell$, where $q$ is a power of $p$ and $\ell$ is not divisible by $p$. 
Then we have the following. 
\begin{itemize} 
\item[(1)] If $q=1$ and $\theta$ is ramified at $P$ with $e_P=d$, then there exist a unique ramification point $Q \ne P$ with $e_Q=d$.  
\item[(2)] If $q>1$, $\ell\ge 2$ and $\theta$ is ramified at $P$ with $e_P=d$, then $\ell$ divides $q-1$ and there exists a point $Q \ne P$ such that $e_Q=\ell$. 
\end{itemize} 
\end{fact}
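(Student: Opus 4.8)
The plan is to translate the ramification data into group theory by means of Fact \ref{Galois covering}, and then to use the explicit shape of a point-stabilizing finite subgroup of ${\rm PGL}(2,K)$. Write $G$ for the Galois group of $\theta$, so that $|G|=d$ and $G$ acts on the source $\Bbb P^1$ with $\theta$ as the quotient map. The hypothesis $e_P=d$ says, by Fact \ref{Galois covering}(2), that $|G(P)|=d=|G|$, hence $G(P)=G$; thus every element of $G$ fixes $P$. After conjugating $P$ to $\infty$, the group $G$ therefore lies in the stabilizer of $\infty$, namely the affine group $\{x \mapsto ax+b : a \in K^*,\ b \in K\}$.

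Next I would describe $G$ concretely. Let $U \le G$ be the subgroup of those translations $x \mapsto x+b$ lying in $G$; it is a finite additive subgroup of $K$, hence an elementary abelian $p$-group, and it is normal in $G$ because the translations are normal in the affine group. The multiplier homomorphism sending $x \mapsto ax+b$ to $a$ has kernel $U$ and image a finite subgroup of $K^*$, which is cyclic of order prime to $p$. Consequently $|U|$ is a power of $p$ and $|G/U|$ is prime to $p$; comparing with $d=q\ell$ forces $|U|=q$ and $G/U$ cyclic of order $\ell$. This structural decomposition drives both cases.

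For part (1), $q=1$ gives $U=1$, so $G$ is cyclic of order $d=\ell$ prime to $p$, generated by an element $g$ of order $d$, which is therefore semisimple (diagonalizable). Such $g$ has exactly two fixed points, $P$ and some $Q \ne P$, and every nontrivial power $g^k$ is again diagonalizable with the same pair of fixed points; hence $P$ and $Q$ are the only ramification points, each totally ramified with index $d$, and $Q$ is unique. For part (2), I would choose $\tau \in G$ projecting to a generator of $G/U$, so its multiplier $a$ is a primitive $\ell$-th root of unity. Conjugation by $\tau$ sends the translation by $u$ to the translation by $au$, so normality of $U$ gives $aU=U$; since $a^k u=u$ with $u \ne 0$ forces $a^k=1$, the cyclic group $\langle a\rangle$ of order $\ell$ acts freely on the $q-1$ nonzero elements of $U$, whence $\ell \mid q-1$. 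Finally $\tau : x \mapsto ax+b_0$ has the second fixed point $Q=b_0/(1-a) \ne P$; after moving $Q$ to the origin, the stabilizer $G(Q)$ meets $U$ trivially and so injects into $G/U$, giving $|G(Q)| \le \ell$, while $\langle \tau\rangle \le G(Q)$ already has order $\ell$. Thus $e_Q=|G(Q)|=\ell$ by Fact \ref{Galois covering}(2).

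The only place demanding care is the identification of this elementary-abelian-by-cyclic structure of a point stabilizer in characteristic $p$, together with the free action of $\langle a\rangle$ on $U \setminus \{0\}$ that produces the divisibility $\ell \mid q-1$; once this is in hand, both conclusions reduce to bookkeeping with stabilizer orders. I would either cite \cite[Theorem 11.91]{hkt} for the classification or record the short self-contained argument sketched above.
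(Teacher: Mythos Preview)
Your proposal is correct. The paper does not actually prove this Fact; it records it as a corollary of the classification of finite subgroups of ${\rm PGL}(2,K)$, citing \cite[Theorem 11.91]{hkt}, and nothing more. Your argument supplies exactly the content one would extract from that classification in the special case of a subgroup with a global fixed point---namely, the elementary-abelian-by-cyclic decomposition $U \lhd G$ coming from the multiplier map on the affine group, the free action of the multiplier $\langle a\rangle$ on $U\setminus\{0\}$ yielding $\ell \mid q-1$, and the stabilizer computation at the second fixed point of a lift $\tau$. So the two approaches coincide in substance; yours is simply the spelled-out version, and since you already note the option of citing \cite[Theorem 11.91]{hkt} in lieu of the direct argument, there is nothing to correct.
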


\section{Proof} 
Throughout in this section, we assume that $\Delta(C)=\Bbb P^2(\Bbb F_q)$. 

\begin{lemma} \label{total flex}
If $P \in C_{\rm sm}(\Bbb F_q)$, then $I_P(C, T_PC)=d$.  
\end{lemma}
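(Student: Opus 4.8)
The plan is to determine $m:=I_P(C,T_PC)$ by comparing, over the line $T_PC$, the ramification of the projection from $P$ with that of the projection from a second $\Bbb F_q$-rational point of $T_PC$; no hypothesis on the genus of $C$ will be needed. First I would fix the set-up. Since $P\in C_{\rm sm}(\Bbb F_q)\subseteq\Bbb P^2(\Bbb F_q)=\Delta(C)$, the point $P$ is Galois for $C$, so $\hat{\pi}_P\colon\hat{C}\to\Bbb P^1$ is a Galois covering of degree $d-1$. By Fact~\ref{index}(1) it ramifies at $\hat{P}$ with index $m-1$, so by Fact~\ref{Galois covering}(1) every point of the fibre $F$ of $\hat{\pi}_P$ over the line $T_PC$ has index $m-1$. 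Moreover $F$ consists exactly of all branches of $C$ lying over the points of $C\cap T_PC$; hence $|F|=(d-1)/(m-1)$, and for every $X\in(C\cap T_PC)\setminus\{P\}$ one has $I_X(C,T_PC)=b_X(m-1)$, where $b_X$ denotes the number of branches over $X$ (again by Facts~\ref{Galois covering}(1) and~\ref{index}(2), using $\overline{PX}=T_PC$). Thus the assertion $I_P(C,T_PC)=d$ is equivalent to $m=d$, i.e.\ to $C\cap T_PC=\{P\}$.

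Next I would bring in the $q\ge 2$ points of $T_PC(\Bbb F_q)$ other than $P$. Each such point $R$ is Galois for $C$, and $\hat{\pi}_R$ has degree $d-\mu_R$, where $\mu_R:={\rm mult}_R(C)$ (with $\mu_R=0$ if $R\notin C$). Since $R\in T_PC$ we have $\overline{PR}=T_PC$, so Fact~\ref{index}(2) shows that $\hat{P}$ ramifies with index $m$ in $\hat{\pi}_R$, and then Fact~\ref{Galois covering}(1) gives that the fibre of $\hat{\pi}_R$ over $T_PC$ has $(d-\mu_R)/m$ points. This fibre and $F$ differ only among the branches over $R$: all $b_R$ of them lie in $F$, whereas $\hat{\pi}_R$ sends to $T_PC$ only those tangent to $T_PC$. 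Writing $\nu_R$ for the number of branches over $R$ that are not tangent to $T_PC$ (so $\nu_R=0$ when $R\notin C$), comparing the two counts yields $(d-\mu_R)/m=(d-1)/(m-1)-\nu_R$, that is,
\[
d=m+(m-1)\bigl(\nu_R m-\mu_R\bigr).
\]
If even one such $R$ lies off $C$, then $\mu_R=\nu_R=0$ and this identity forces $m=d$, and the lemma is proved.

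It remains to exclude the possibility that all $q$ points of $T_PC(\Bbb F_q)\setminus\{P\}$ lie on $C$. In that case $\nu_R m-\mu_R$ equals one and the same constant $c:=(d-m)/(m-1)\ge 0$ for every such $R$. I would first check $b_R\ge\nu_R+1$: indeed $b_R=\nu_R$ would give $I_R(C,T_PC)=b_R(m-1)=c(m-1)=d-m$, hence $I_P(C,T_PC)+I_R(C,T_PC)=d$ and so $C\cap T_PC=\{P,R\}$, contradicting that $T_PC$ already carries $q\ge 2$ points of $C$ besides $P$. Consequently $b_R\ge c+2$, so $I_R(C,T_PC)\ge(c+2)(m-1)$ for each of the $q$ such $R$; adding $I_P(C,T_PC)=m$ and using B\'ezout's theorem gives $d\ge m+q(c+2)(m-1)$, while $d=m+c(m-1)$, whence $c\ge q(c+2)$, which is impossible for $q\ge 2$ and $c\ge 0$. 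This contradiction rules out the case and completes the proof.

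The step I expect to be the main obstacle is the fibre comparison that produces the displayed identity: one must describe the fibre of $\hat{\pi}_R$ over $T_PC$ precisely, carefully separating the branches over $R$ that are tangent to $T_PC$ from those that are not and verifying that the branches over all remaining points of $C\cap T_PC$ contribute identically to the two fibres, and one must treat the degenerate sub-case $b_R=\nu_R$ as above. Once that identity is established, the remaining arguments are elementary.
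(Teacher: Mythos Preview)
Your argument is correct in substance, though one step is left unjustified: from $b_R\ge\nu_R+1$ you jump to ``Consequently $b_R\ge c+2$''. This does hold, but it needs the observation that each non-tangent branch over $R$ has multiplicity exactly ${\rm ord}_{\hat R_i}\pi^*h_{T_PC}=m-1$, so $\mu_R=\nu_R(m-1)+\sum_{\mathrm{tangent}}m_i$ and hence $\nu_R=c+\sum_{\mathrm{tangent}}m_i\ge c+(b_R-\nu_R)$; combining with $\nu_R\le b_R-1$ then yields $b_R\ge c+2$. (In fact the sub-case $b_R>\nu_R$ is vacuous: a branch over $R$ tangent to $T_PC$ would lie in $F'$ with $\hat\pi_R$-index $m$, but that index equals $(m-1)-m_i$, forcing $m_i=-1$.)

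The paper's proof takes a much shorter route by reversing your order of choices. Instead of fixing a rational $R\in T_PC$ and comparing global fibre sizes, it first picks \emph{any} point $Q\in(C\cap T_PC)\setminus\{P\}$ together with a branch $\hat Q$ over it, and only \emph{then} chooses $P'\in T_PC(\Bbb F_q)\setminus\{P,Q\}$ (possible since $\sharp T_PC(\Bbb F_q)=q+1\ge 3$). Because $Q\ne P$ and $Q\ne P'$, Fact~\ref{index}(2) says the single number ${\rm ord}_{\hat Q}\pi^*h$ is simultaneously the ramification index of $\hat Q$ for $\hat\pi_P$ and for $\hat\pi_{P'}$; by Fact~\ref{Galois covering}(1) the former equals $e_{\hat P}=m-1$ while the latter equals $e_{\hat P}=m$. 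The contradiction $m-1=m$ is immediate, with no need to know whether $P'$ lies on $C$, to count fibres, or to separate tangent from non-tangent branches. Your global count ultimately encodes the same obstruction, but the local comparison at a single well-chosen $\hat Q$ is what collapses the argument to a few lines.
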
 

\begin{proof} 
Since $P$ is $\Bbb F_q$-rational, the tangent line $T_PC$ is defined over $\Bbb F_q$. 
Assume that $T_PC \cap C \setminus \{P\} \ne \emptyset$. 
Let $Q$ be such a point and let $\hat{Q} \in \hat{C}$ with $\pi(\hat{Q})=Q$. 
There exists a point $P' \in T_PC(\Bbb F_q) \setminus \{P, Q\}$, since $\sharp T_PC(\Bbb F_q) \ge 3$. 
Let $h$ be a linear polynomial defining $T_PC$ around $Q$. 
Considering the projection $\hat{\pi}_P$, by Facts \ref{index} and \ref{Galois covering}(1), ${\rm ord}_{\hat{Q}}\pi^*h=I_P(C, T_PC)-1$. 
Considering $\hat{\pi}_{P'}$, by Facts \ref{index}(2) and \ref{Galois covering}(1), we also have ${\rm ord}_{\hat{Q}}\pi^*h=I_P(C, T_PC)$. 
This is a contradiction. 
Therefore, we have $C \cap T_PC=\{P\}$. 
\end{proof}

\begin{lemma} \label{singularity}
Assume that $C_{\rm sm}(\Bbb F_q) \ne \emptyset$. 
If $Q \in C(\Bbb F_q)$, then $\sharp \pi^{-1}(Q)=m(Q)$, where $m(Q)$ is the multiplicity at $Q$. 
\end{lemma}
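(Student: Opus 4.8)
The plan is to fix a smooth $\Bbb F_q$-point $P_0 \in C_{\rm sm}(\Bbb F_q)$ and exploit the Galois projection from $Q$ itself, which is a Galois point since $Q \in C(\Bbb F_q) \subset \Bbb P^2(\Bbb F_q) = \Delta(C)$. Write $\hat\pi_Q : \hat C \to \Bbb P^1$ for the composed projection; it has degree $d - m(Q)$, because a general line through $Q$ meets $C$ in $d - m(Q)$ further points (the intersection at $Q$ absorbing multiplicity $m(Q)$). The branches in $\pi^{-1}(Q)$ are precisely the points of $\hat C$ lying over $Q$, and the line $\overline{P_0 Q}$ (defined over $\Bbb F_q$, since both points are rational) is the fiber of $\hat\pi_Q$ through the branch over $P_0$; I would first analyze the ramification of $\hat\pi_Q$ along this fiber.

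The key computation is to compare, for a branch $\hat R \in \pi^{-1}(Q)$ with local uniformizing data, the ramification index $e_{\hat R}$ of $\hat\pi_Q$ with the contribution of $\hat R$ to the multiplicity $m(Q)$. Using Fact \ref{index}(2): if $h$ is a linear form defining a general line through $Q$, then $e_{\hat R} = {\rm ord}_{\hat R}\pi^* h$, and summing $\sum_{\hat R \in \pi^{-1}(Q)} {\rm ord}_{\hat R}\pi^* h$ gives exactly $m(Q)$ (the local intersection multiplicity of $C$ with a general line at $Q$). On the other hand, since $Q$ is a Galois point, Fact \ref{Galois covering}(1) forces all branches in a common fiber of $\hat\pi_Q$ to have equal ramification index; applying this to the fiber $\overline{P_0Q}$, whose branch over $P_0$ has ramification index $e_{P_0} = I_{P_0}(C, \overline{P_0 Q}) = 1$ for a sufficiently general choice of $P_0$ — or, using Lemma \ref{total flex}, noting that $\overline{P_0 Q} \ne T_{P_0}C$ since $C \cap T_{P_0}C = \{P_0\}$ while $Q$ lies on $\overline{P_0Q}\cap C$ — I get that the branch of $\hat C$ over $P_0$ in this fiber is unramified. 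By the stabilizer description (Fact \ref{Galois covering}(2)) the whole fiber is then unramified, and by homogeneity the same holds for a general fiber, so $e_{\hat R} = 1$ for every $\hat R \in \pi^{-1}(Q)$. Feeding this back into the sum $m(Q) = \sum_{\hat R \in \pi^{-1}(Q)} e_{\hat R} = \sharp\pi^{-1}(Q)$ yields the claim.

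The main obstacle is justifying that $\hat\pi_Q$ is unramified over a general point, equivalently that every branch at $Q$ is a smooth branch transverse to a general line; the subtlety is that $Q$ itself is the center of projection, so Fact \ref{index} must be applied at $Q$ with care, reading off $e_{\hat R}$ from the intersection multiplicity along a general line rather than a tangent line. I would handle this by picking the auxiliary rational point $P_0$ so that $\overline{P_0 Q}$ is a general line through $Q$ (possible because $\sharp C(\Bbb F_q)$ grows and only finitely many lines through $Q$ are special, while $\overline{P_0Q} \ni P_0$ forces it off the tangent cone for suitable $P_0$), then transfer the unramifiedness at the branch over $P_0$ — guaranteed by Lemma \ref{total flex} — to all branches over $Q$ via the Galois symmetry of $\hat\pi_Q$.
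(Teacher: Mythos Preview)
Your approach has a genuine gap: projecting from $Q$ itself does not give the control over the branches $\hat R \in \pi^{-1}(Q)$ that you claim. Fact \ref{index}(2) requires $\pi(\hat R)\ne P$ where $P$ is the projection center, and here $\pi(\hat R)=Q$ \emph{is} the center, so the identity $e_{\hat R}={\rm ord}_{\hat R}\pi^*h$ for a general line $h$ through $Q$ fails. Concretely, if a branch at $Q$ meets a general line to order $a$ and its tangent to order $b>a$, then the ramification index of $\hat\pi_Q$ at that branch is $b-a$, not $a$; hence your equation $m(Q)=\sum_{\hat R}e_{\hat R}$ is wrong in general. There is a second problem: under $\hat\pi_Q$ a branch $\hat R$ over $Q$ lands in the fiber corresponding to its own tangent direction, not in the fiber over a general line $\overline{P_0Q}$. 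So the Galois symmetry on the fiber through $P_0$ says nothing about branches at $Q$, and the appeal to ``homogeneity'' to pass from one unramified fiber to all fibers is unjustified (Galois coverings do have ramified fibers).

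The paper's fix is exactly the missing idea: instead of projecting from $Q$, choose a third $\Bbb F_q$-rational point $P'\in\overline{PQ}\setminus\{P,Q\}$ (with $P\in C_{\rm sm}(\Bbb F_q)$) and project from $P'$. Now the fiber of $\hat\pi_{P'}$ over the line $\overline{PQ}=\overline{P'Q}$ contains both $P$ (with $e_P=I_P(C,\overline{PQ})=1$ by Lemma \ref{total flex}) and every branch $\hat R\in\pi^{-1}(Q)$; since $\pi(\hat R)=Q\ne P'$, Fact \ref{index}(2) applies and gives $e_{\hat R}={\rm ord}_{\hat R}\pi^*h$. Fact \ref{Galois covering}(1) then forces ${\rm ord}_{\hat R}\pi^*h=1$ for every $\hat R$, so each branch is nonsingular and $m(Q)=\sharp\pi^{-1}(Q)$.
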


\begin{proof}
Let $P \in C_{\rm sm}(\Bbb F_q)$. 
Then, the line $\overline{PQ}$ is $\Bbb F_q$-rational. 
By Lemma \ref{total flex}, $I_P(C, \overline{PQ})=1$. 
There exists an $\Bbb F_q$-point $P'$ such that $P' \in \overline{PQ} \setminus \{P, Q\}$. 
For the projection from $P'$, by Facts \ref{index}(2) and \ref{Galois covering}(1), each point in $\pi^{-1}(Q)$ is not a ramification point. 
Therefore, we have the conclusion. 
\end{proof}

\begin{lemma} 
If $C_{\rm sm}(\Bbb F_q) \ne \emptyset$, then $\hat{C}$ is not elliptic. 
\end{lemma}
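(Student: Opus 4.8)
The plan is to argue by contradiction, so suppose $\hat{C}$ is elliptic and write $E:=\hat{C}$. The first step is to produce two Galois points whose Galois groups are subgroups of ${\rm Aut}(E)$ fixing one common point; this will cut $d$ down to $4$. Fix $P\in C_{\rm sm}(\Bbb F_q)$; by the standing assumption of this section $P\in\Delta(C)$. By Lemma \ref{total flex} and Fact \ref{index}(1) the projection $\hat{\pi}_P\colon E\to\Bbb P^1$ is a Galois covering of degree $d-1$ that is totally ramified at the unique point $\hat{P}$ over $P$, so by Fact \ref{Galois covering}(2) its Galois group $G_P\subseteq{\rm Aut}(E)$ has order $d-1$ and fixes $\hat{P}$. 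Since $C\cap T_PC=\{P\}$ by Lemma \ref{total flex}, any $P'\in T_PC(\Bbb F_q)\setminus\{P\}$ (there is one, as $\sharp T_PC(\Bbb F_q)=q+1\ge 2$) lies off $C$, so $\hat{\pi}_{P'}$ has degree $d$, and Fact \ref{index}(2) applied to a linear form defining $T_PC=\overline{P'P}$ gives $e_{\hat{P}}=I_P(C,T_PC)=d$ for $\hat{\pi}_{P'}$; hence $G_{P'}\subseteq{\rm Aut}(E)$ has order $d$ and also fixes $\hat{P}$. So $G_P$ and $G_{P'}$ are subgroups of the finite group ${\rm Aut}(E,\hat{P})$ of automorphisms fixing $\hat{P}$, whose order is at most $24$. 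As $\gcd(d-1,d)=1$, we get $d(d-1)\mid|{\rm Aut}(E,\hat{P})|\le 24$, and $d\ge 4$ then forces $d=4$ and $|{\rm Aut}(E,\hat{P})|\in\{12,24\}$; consequently $p\in\{2,3\}$ and $E$ is supersingular.

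Next I classify the singularities. With $d=4$ and $E$ elliptic, $C$ has arithmetic genus $3$ and geometric genus $1$, so $\sum_Q\delta_Q(C)=2$ and in particular $C$ is singular. Therefore $C$ has either two nodes, or a single singular point of $\delta$-invariant $2$; in the latter case that point is the whole singular scheme, hence $\Bbb F_q$-rational, so by Lemma \ref{singularity} it has two branches, i.e. it is a tacnode. Recall also that $G_P$ and $G_{P'}$ may be realised as groups of projective transformations of $\Bbb P^2$ preserving $C$ (see \cite{miura-yoshihara, yoshihara}); thus each of them permutes the singular locus $C_{\rm sing}$ and the finite set $\pi^{-1}(C_{\rm sing})\subseteq E$ of branches lying over it.

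Now I compare fixed-point counts. Here $G_P$ is cyclic of order $3$ and $G_{P'}$ is cyclic of order $4$ (in the two cases at hand the order-$4$ subgroups of ${\rm Aut}(E,\hat{P})$ are cyclic); let $\sigma,\sigma'$ be respective generators. Since $E\to E/G_P=\Bbb P^1$ is the Galois cover $\hat{\pi}_P$, a Riemann--Hurwitz count shows that $\sigma$ has exactly one fixed point on $E$ when $p=3$ (the cover is wildly ramified, each ramification point contributing different exponent at least $4$ to a total of $6$) and exactly three when $p=2$; and since $\hat{\pi}_{P'}$ is totally ramified at $\hat{P}$ and factors through the degree-$2$ map $E\to E/\langle\sigma'^2\rangle$, that map is ramified over $\hat{P}$, whence $E/\langle\sigma'^2\rangle\cong\Bbb P^1$ and Riemann--Hurwitz bounds the number of fixed points of $\sigma'^2$ by two. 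On the other hand the branches over $C_{\rm sing}$ force more: if $p=3$ then $\sigma$, having odd order, fixes every singular point and hence fixes some branch over it, a point of $E$ distinct from $\hat{P}$, contradicting that $\sigma$ fixes only $\hat{P}$; if $p=2$ and $C_{\rm sing}$ consists of two nodes then $\sigma$ fixes both nodes and hence all four branches over them, at least five fixed points against the bound three; and if $p=2$ and $C_{\rm sing}$ is a single tacnode $Q$ then $\sigma'$ fixes $Q$ and acts on the two branches over $Q$ through a group of order dividing $2$, so $\sigma'^2$ fixes both of them together with $\hat{P}$, three fixed points against the bound two. Every case is impossible, so $\hat{C}$ is not elliptic.

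I expect the third step to be the crux: one must carry out the (partly wild) Riemann--Hurwitz fixed-point estimates with care, and verify that in every characteristic and for every admissible singularity configuration the number of branches forced to be fixed really exceeds the bound. By contrast the reduction to $d=4$ and $p\in\{2,3\}$, and the list of possible singularities, are comparatively routine once the totally ramified behaviour of $\hat{\pi}_P$ and $\hat{\pi}_{P'}$ at $\hat{P}$ coming from Lemma \ref{total flex} is in hand.
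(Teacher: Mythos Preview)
Your reduction to $d=4$ and $p\in\{2,3\}$ is correct and matches the paper's opening. From there the arguments diverge. The paper never analyses the singularities of $C$: it instead picks a \emph{second} outer Galois point $P''\in T_PC(\Bbb F_q)\setminus\{P,P'\}$ (available since $q+1\ge 3$), invokes \cite[Lemma~7]{fukasawa2} to get $G_{P'}\cap G_{P''}=\{1\}$, and finishes by pure group theory inside ${\rm Aut}(E,\hat P)$: the product set $G_{P'}G_{P''}$ then has $16$ elements, impossible for $p=3$ since $|{\rm Aut}(E,\hat P)|\le 12$, and for $p=2$ it forces $G_{P'},G_{P''}$ into distinct Sylow $2$-subgroups of a group whose Sylow $2$-subgroup is normal, hence unique.

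Your alternative route has a real gap. The step ``$G_P$ and $G_{P'}$ may be realised as projective transformations of $\Bbb P^2$'' is exactly what would let them permute $C_{\rm sing}$ and the branch set $\pi^{-1}(C_{\rm sing})$, but the references \cite{miura-yoshihara, yoshihara} establish this only for \emph{smooth} plane curves. Here $\pi:E\to C\subset\Bbb P^2$ is given by a proper $3$-dimensional sub-linear-system of the complete $4$-dimensional system $|\pi^*\mathcal O_C(1)|$ on $E$; an element $\sigma\in G_{P'}$ certainly preserves the complete system (because $\pi^*\mathcal O_C(1)\cong\hat\pi_{P'}^*\mathcal O_{\Bbb P^1}(1)$) and the pencil $\langle X,Y\rangle$ pulled back from $\Bbb P^1$, but nothing in your argument forces it to preserve the intermediate $3$-plane $\langle X,Y,Z\rangle$. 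Without that, $\sigma$ is merely an abstract automorphism of $E$ with no reason to respect $\pi$ or the branch set, and the fixed-point counting cannot start. A secondary gap: when the two $\delta=1$ points form a Frobenius-conjugate pair over $\Bbb F_q$, Lemma~\ref{singularity} does not apply and they may be cusps rather than nodes; in that sub-case of $p=2$ your count for $\sigma$ gives only $1+1+1=3$ fixed points, meeting the bound rather than exceeding it.
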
 

\begin{proof}
Assume by contradiction that $E=\hat{C}$ is elliptic. 
Let $P \in C_{\rm sm}(\Bbb F_q)$. 
Then, by Lemma \ref{total flex}, there are two points $P', P''\in (T_PC \cap (\Bbb P^2 \setminus C))(\Bbb F_q)$. 
By Facts \ref{index} and \ref{Galois covering}, any automorphism of $E$ given by the Galois groups at $P, P', P''$ fixes the point $P$.  
It is known that the order of the automorphism group ${\rm Aut}(E, P)$ of an elliptic curve $E$ fixing a point $P$ divides $24$, and the order is $24$ (resp. $12$) only if $p=2$ (resp. $p=3$) (see \cite[III, Theorem 10.1]{silverman}). 
Since the orders of Galois groups at points $P, P'$ are $d-1, d$ respectively, $d(d-1)$ divides $24$.   
Then, we have $d=4$ and $p=2$ or $3$.

Let $G_{P'}, G_{P''}$ be the Galois groups at $P', P''$ respectively. 
According to \cite[Lemma 7]{fukasawa2},  $G_{P'} \cap G_{P''} =\{1\}$. 
Then, $|{\rm Aut}(E, P)| \ge 4 \times 4=16$.
Since $|{\rm Aut}(E, P)| \le 12$ if $p=3$, we have $p=2$ and $|{\rm Aut}(E, P)|=24$. 
Furthermore, $G_{P'}$ and $G_{P''}$ are contained in different Sylow $2$-groups of ${\rm Aut}(E, P)$ (whose order is eight).  
This is a contradiction to the fact that a Sylow $2$-group is unique, since it is known that a Sylow $2$-group of ${\rm Aut}(E, P)$ is a normal subgroup (\cite[Exercise A.1.(b)]{silverman}). 
\end{proof}

Hereafter, we assume that $\hat{C}=\Bbb P^1$ and $C_{\rm sm}(\Bbb F_q) \ne \emptyset$. 

\begin{lemma} \label{condition on d}
The characteristic $p$ does not divide $d$. 
\end{lemma}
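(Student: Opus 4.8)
The plan is to argue by contradiction, so suppose $p$ divides $d$. I would start from a point $P \in C_{\rm sm}(\Bbb F_q)$ and the projection $\hat{\pi}_P : \hat{C} = \Bbb P^1 \rightarrow \Bbb P^1$ from $P$, which is Galois of degree $d-1$ because $P \in \Bbb P^2(\Bbb F_q) = \Delta(C)$. By Lemma \ref{total flex} and Fact \ref{index}(1), $e_{\hat{P}} = I_P(C, T_PC) - 1 = d-1$, where $\hat{P} = \pi^{-1}(P)$; thus $\hat{\pi}_P$ is totally ramified at $\hat{P}$. Since $p \mid d$ we have $p \nmid d-1$, and $d-1 \ge 3$, so Fact \ref{rational-ramification}(1) applies and yields a unique ramification point $\hat{Q} \ne \hat{P}$ of $\hat{\pi}_P$ with $e_{\hat{Q}} = d-1$.

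Next I would pin down the geometry of $Q := \pi(\hat{Q})$, which is distinct from $P$ since $\hat{Q} \ne \hat{P} = \pi^{-1}(P)$. As $\hat{\pi}_P$ is defined over $\Bbb F_q$, its ramification locus and the decomposition of it by ramification index are stable under ${\rm Gal}(\overline{\Bbb F}_q/\Bbb F_q)$; the index-$(d-1)$ part is $\{\hat{P}, \hat{Q}\}$ and $\hat{P} \in \hat{C}(\Bbb F_q)$, so $\hat{Q} \in \hat{C}(\Bbb F_q)$ and hence $Q \in C(\Bbb F_q)$. Because $e_{\hat{Q}} = d-1 = \deg \hat{\pi}_P$, the fibre of $\hat{\pi}_P$ through $\hat{Q}$ reduces to $\{\hat{Q}\}$; since this fibre consists of the points of $\hat{C}$ lying over $(\overline{PQ} \cap C) \setminus \{P\}$ — here $\overline{PQ} \ne T_PC$ by Lemma \ref{total flex}, as $Q \in C \setminus \{P\}$ — I conclude that $\overline{PQ} \cap C = \{P, Q\}$ and $\pi^{-1}(Q) = \{\hat{Q}\}$. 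Lemma \ref{singularity} then forces $m(Q) = 1$, i.e. $Q \in C_{\rm sm}(\Bbb F_q)$. Finally, taking a linear form $h$ defining $\overline{PQ}$, the equality $\deg \pi^*h = d$ together with ${\rm ord}_{\hat{Q}}\pi^*h = e_{\hat{Q}} = d-1$ gives ${\rm ord}_{\hat{P}}\pi^*h = 1$, and by Fact \ref{index}(2) we get $I_Q(C, \overline{PQ}) = d-1$.

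To finish, I would observe that $\overline{PQ}$ has intersection multiplicity $d-1 \ge 3$ with $C$ at the smooth point $Q$, so $\overline{PQ}$ must be the tangent line $T_QC$; but then Lemma \ref{total flex} applied at $Q$ gives $I_Q(C, T_QC) = d$, whence $d-1 = d$, a contradiction. Therefore $p$ does not divide $d$.

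I expect the only delicate part to be the middle paragraph: verifying that $\hat{Q}$ is $\Bbb F_q$-rational, that the line $\overline{PQ}$ meets $C$ in exactly the two points $P$ and $Q$, and that $Q$ is a smooth point — these are exactly the ingredients that allow Lemma \ref{total flex} to be re-applied at $Q$ and thereby close the argument. Everything else is a direct consequence of Facts \ref{index} and \ref{rational-ramification} and Lemmas \ref{total flex} and \ref{singularity}.
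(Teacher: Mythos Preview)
Your argument is correct and follows the same route as the paper: locate the second totally ramified point $\hat{Q}$ of $\hat{\pi}_P$ via Fact~\ref{rational-ramification}(1), show it is $\Bbb F_q$-rational and that $Q=\pi(\hat{Q})$ is smooth using Lemma~\ref{singularity}, and then obtain a contradiction from the line $\overline{PQ}$. Your implementation is slightly cleaner in two spots---you deduce the rationality of $\hat{Q}$ from Galois-invariance of the ramification locus rather than the paper's explicit coordinate computation, and you close by re-applying Lemma~\ref{total flex} at $Q$ instead of invoking Fact~\ref{Galois covering}(1) for $\hat{\pi}_Q$---but the skeleton is identical.
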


\begin{proof} 
Let $P \in C_{\rm sm}(\Bbb F_q)$. 
Then, the tangent line $T_PC$ is $\Bbb F_q$-rational. 
Assume that $p$ divides $d$. 
Then $d-1$ is not divisible by $p$.  
By Lemma \ref{total flex}, Facts \ref{index}(1) and \ref{rational-ramification}(1), for $\hat{\pi}_P$, there exists a point $\hat{Q} \in \hat{C}$ such that the ramification index at $\hat{Q}$ is equal to $d-1$. 
For a suitable system of coordinates, we may assume that $T_PC$ is given by $X=0$ and $P=(0:0:1)$. 
We denote by $\pi=(f(s, t):g(s, t):h(s, t))$, where $f, g, h \in \Bbb F_q[s,t]$ are homogeneous polynomials of degree $d$. 
Since $C \cap \{X=0\}=\{P\}$, $f(s,1)=0$ has a unique solution $s=\alpha \in \Bbb F_q$. 
Therefore, $f(s, 1)=a(s-\alpha)^d$ for some $a \in \Bbb F_q$. 
For a suitable system of coordinates, we can take $f(s,1)=s^d$. 
The projection $\hat{\pi}_P$ is given by $(1:g(1, t))$. 
Since $g'(1, t)=0$ has a unique solution $t=\beta \in \overline{\Bbb F}_q$ corresponding to $\hat{Q}$,  $g'(1, t)=b(t-\beta)^{d-2}$ for some $b \in \Bbb F_q\setminus\{0\}$. 
Since $g' \in \Bbb F_q[t]$, $\beta^q=\beta$ and hence, $\hat{Q}$ is $\Bbb F_q$-rational. 
Then, $Q=\pi(\hat{Q})$ is Galois and $\pi^{-1}(C \cap \overline{PQ})=\{P, \hat{Q}\}$. 
By Lemma \ref{singularity}, $1=\sharp \pi^{-1}(Q)=m(Q)$ and hence, $Q \in C_{\rm sm}$. 
For $\hat{\pi}_Q$, $e_P=1$ and $e_{\hat{Q}}=d-2$. 
This is a contradiction to Fact \ref{Galois covering}(1). 
\end{proof}

\begin{lemma} \label{rational point}
We have $\pi(\Bbb P^1(\Bbb F_q)) \subset C_{\rm sm}(\Bbb F_q)$. 
\end{lemma}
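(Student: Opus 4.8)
The plan is to show that every $\Bbb F_q$-rational point of $\hat C=\Bbb P^1$ maps to a smooth $\Bbb F_q$-point of $C$. The natural strategy is: if some $\hat R\in\Bbb P^1(\Bbb F_q)$ had $Q=\pi(\hat R)$ singular, we would use the projection from $Q$ (which is an $\Bbb F_q$-point, hence Galois by hypothesis $\Delta(C)=\Bbb P^2(\Bbb F_q)$) and the ramification/stabilizer facts to derive a contradiction. Since $Q\in C(\Bbb F_q)$ and $C_{\rm sm}(\Bbb F_q)\neq\emptyset$, Lemma~\ref{singularity} gives $\sharp\pi^{-1}(Q)=m(Q)=:m\ge 2$, and since $\hat C$ is defined over $\Bbb F_q$ with $\pi$ an $\Bbb F_q$-morphism, the Frobenius permutes the $m$ branches over $Q$; so at least one branch over $Q$ is $\Bbb F_q$-rational precisely when one, say $\hat R$, is fixed — but we do not want to assume that, so instead I will directly argue from the existence of the given $\Bbb F_q$-point $\hat R$ lying over $Q$.

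The key steps, in order, are as follows. First, fix $P\in C_{\rm sm}(\Bbb F_q)$; by Lemma~\ref{total flex} the line $\overline{PQ}$ meets $C$ only at $P$ (with $I_P=1$) and at $Q$. Consider the Galois covering $\hat\pi_P:\Bbb P^1\to\Bbb P^1$ of degree $d$. By Fact~\ref{index} and Fact~\ref{Galois covering}(1), every point of $\pi^{-1}(Q)$ has the same ramification index $e$ under $\hat\pi_P$, and the fibre $\hat\pi_P^{-1}(\hat\pi_P(\hat R))$ consists exactly of the $m$ points of $\pi^{-1}(Q)$ (since $\overline{PQ}\cap C=\{P,Q\}$ and $P$ contributes a single unramified point on a different fibre); hence $me=d$. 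Second, consider the Galois covering $\hat\pi_Q:\Bbb P^1\to\Bbb P^1$ of degree $d-m$ from the singular point $Q$, which is Galois by hypothesis. The point $\hat R\in\pi^{-1}(Q)$ is a branch whose tangent direction is the line $\overline{QR}$... but $R=Q$, so instead one looks at the branches over $Q$ as "base points" removed: the covering $\hat\pi_Q$ has degree $d-m$, and by Lemma~\ref{total flex} applied at $P$ the point $P$ is a total ramification point of $\hat\pi_Q$ with $e_P=I_P(C,T_PC)-[\text{mult.\ of }\overline{PQ}\text{ issue}]$; more precisely $e_P = I_P(C,\overline{QP})$-type computation gives $e_P=d-m$ after subtracting the contribution... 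Let me phrase it cleanly: by Fact~\ref{index}(2) the ramification of $\hat\pi_Q$ at any $\hat T\notin\pi^{-1}(Q)$ is ${\rm ord}_{\hat T}\pi^*h$ for $h$ defining $\overline{QT}$; taking $T=P$ and using $C\cap T_PC=\{P\}$ from Lemma~\ref{total flex} together with the fact that $T_PC$ passes through $Q$ only if $Q\in T_PC$, we get $e_P = d - m$, i.e.\ $P$ is totally ramified for $\hat\pi_Q$. Now apply Fact~\ref{rational-ramification}: write $d-m = q'\ell$ with $q'$ a power of $p$ and $p\nmid\ell$; since $\hat\pi_Q$ is totally ramified at $P$, there is a second point $Q'$ with $e_{Q'}$ equal to $d-m$ (case $q'=1$) or equal to $\ell$ (case $q'>1$), and in either case, by comparing with the ramification forced along the tangent line at $P$ under $\hat\pi_Q$ and using Lemma~\ref{condition on d} ($p\nmid d$), one pins down the numerics and finds $Q'$ is $\Bbb F_q$-rational, hence $\pi(Q')$ is a Galois point; then Lemma~\ref{singularity} forces $\pi(Q')$ smooth, and finally Fact~\ref{Galois covering}(1) applied to the projection from $\pi(Q')$ produces two points in a common fibre with different ramification indices — the contradiction.

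I expect the main obstacle to be the middle bookkeeping: correctly identifying the fibre structure of $\hat\pi_Q$ when $Q$ is singular (so the "fibre over the image of $P$" interacts with the removed branches $\pi^{-1}(Q)$), and then extracting from Fact~\ref{rational-ramification} a ramification point that is provably $\Bbb F_q$-rational. The rationality of the second ramification point is the crux: one gets it either because it is the unique such point (so Frobenius-invariant, as in the $q=1$ case of Fact~\ref{rational-ramification}) or because the set of points with the exceptional index $\ell$ is small and Galois-stable. Once a singular $\Bbb F_q$-point in the image would manufacture a \emph{smooth} Galois point $\pi(Q')$ with inconsistent ramification, Lemma~\ref{singularity} and Fact~\ref{Galois covering}(1) close the argument exactly as in the proof of Lemma~\ref{condition on d}. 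So the proof is a variation on that of Lemma~\ref{condition on d}, with the roles of $d$ and $d-1$ replaced by $d$ and $d-m$, and I would organize it to reuse that template as much as possible.
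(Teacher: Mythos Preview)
Your argument breaks at the central step where you claim that $P$ is totally ramified for $\hat\pi_Q$ with $e_P = d-m$. By Lemma~\ref{total flex} one has $T_PC \cap C = \{P\}$; since $Q \neq P$, this forces $Q \notin T_PC$, hence $\overline{QP} \neq T_PC$, and Fact~\ref{index}(2) then gives $e_P = I_P(C,\overline{QP}) = 1$. So $P$ is \emph{unramified} for $\hat\pi_Q$, Fact~\ref{rational-ramification} cannot be invoked, and the remainder of the plan collapses. (There are smaller slips earlier as well: $\hat\pi_P$ has degree $d-1$, not $d$, since $P \in C_{\rm sm}$; and you never justify $\overline{PQ}\cap C = \{P,Q\}$, so even after correcting the degree the relation ``$me = d$'' is unsupported.)

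The paper's proof follows a completely different and much shorter idea. Take $\hat P_1 \in \Bbb P^1(\Bbb F_q)$ with $\pi(\hat P_1)$ smooth, and let $\hat P_2 \in \Bbb P^1(\Bbb F_q)$ be arbitrary. By Lemma~\ref{singularity} both are non-singular branches, so their tangent lines are defined, and the point $Q := T_{\hat P_1}C \cap T_{\hat P_2}C$ is $\Bbb F_q$-rational, hence Galois. Because $Q$ lies on $T_{P_1}C$ and $I_{P_1}(C,T_{P_1}C)=d$ by Lemma~\ref{total flex}, the point $\hat P_1$ is totally ramified for $\hat\pi_Q$; since $p\nmid d$ by Lemma~\ref{condition on d}, Fact~\ref{rational-ramification}(1) forces $\hat P_2$ (which is ramified, as $Q \in T_{\hat P_2}C$) to be totally ramified too. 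Hence $\sharp\pi^{-1}(\pi(\hat P_2))=1$, and $\pi(\hat P_2)$ is smooth by Lemma~\ref{singularity}. The key move you are missing is to choose the auxiliary Galois point \emph{on the tangent line} of a known smooth $\Bbb F_q$-point, so that total ramification there is automatic.
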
 

\begin{proof} 
Let $\hat{P}_1, \hat{P}_2 \in \Bbb P^2(\Bbb F_q)$ and $\pi(\hat{P}_1) \in C_{\rm sm}$. 
By Lemma \ref{singularity}, $\hat{P}_2$ is a non-singular branch and we can define the tangent line $T_{\hat{P}_i}C$ for $i=1, 2$. 
Then the intersection point $Q$ given by $T_{\hat{P}_1}C \cap T_{\hat{P}_2}C$ is $\Bbb F_q$-rational. 
Since $\hat{P}_1$ is a total ramification point for the Galois covering $\hat{\pi}_Q$ and $p$ does not divide $d$ by Lemma \ref{condition on d}, $\hat{P}_2$ must be also a total ramification point due to Fact \ref{rational-ramification}(1). 
We have $1=\sharp\pi^{-1}(\pi(\hat{P}_2))=m(\pi(\hat{P}_2))$. 
Hence, $\pi(\hat{P}_2)$ is smooth. 
\end{proof}

\begin{lemma} \label{distribution}
There exist $P_1, P_2, P_3 \in C_{\rm sm}(\Bbb F_q)$ and $Q_1, Q_2, Q_3 \in (\Bbb P^2 \setminus C)(\Bbb F_q)$ such that points $Q_1, Q_2, Q_3$ are not collinear and $T_{P_i}C=\overline{Q_jQ_k}$ for each $i, j, k$ with $\{i, j, k\}=\{1, 2, 3\}$.  
\end{lemma}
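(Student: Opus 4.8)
The plan is to produce the three points $P_1, P_2, P_3$ and $Q_1, Q_2, Q_3$ by bootstrapping off the structure already established: every $\Bbb F_q$-rational point of $\hat{C}=\Bbb P^1$ maps to a smooth $\Bbb F_q$-point of $C$ (Lemma \ref{rational point}), every smooth $\Bbb F_q$-point is a total inflection (Lemma \ref{total flex}), and $p \nmid d$ (Lemma \ref{condition on d}). First I would fix one point $P_1 \in C_{\rm sm}(\Bbb F_q)$; its tangent line $T_{P_1}C$ is $\Bbb F_q$-rational and meets $C$ only at $P_1$ with multiplicity $d$. Projecting from a point $Q \in (T_{P_1}C \setminus C)(\Bbb F_q)$ gives a Galois cover $\hat{\pi}_Q \colon \Bbb P^1 \to \Bbb P^1$ totally ramified at $P_1$; by Fact \ref{rational-ramification}(1) (applicable since $p \nmid d$) there is a second total ramification point $\hat{P}_1' \ne P_1$, and I would argue as in Lemma \ref{condition on d} that $\hat{P}_1'$ is $\Bbb F_q$-rational — because the ramification divisor of $\hat{\pi}_Q$ is $\Bbb F_q$-rational and $P_1$ is an $\Bbb F_q$-rational point of it, so its complement, a single point, is also $\Bbb F_q$-rational. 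Hence $P := \pi(\hat{P}_1')$ is a smooth $\Bbb F_q$-point distinct from $P_1$, and $T_PC$ also passes through $Q$. So: for every $Q \in (T_{P_1}C \setminus C)(\Bbb F_q)$ there is a smooth $\Bbb F_q$-point $P$ with $Q \in T_PC$ and $P \ne P_1$.

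Next I would use this to manufacture a genuinely two-dimensional configuration. Since $\sharp T_{P_1}C(\Bbb F_q) = q+1 \ge 3$ and $T_{P_1}C \cap C = \{P_1\}$, pick two distinct points $Q_3, Q_2 \in (T_{P_1}C \setminus C)(\Bbb F_q)$. From $Q_2$ we obtain a smooth $\Bbb F_q$-point $P_3 \ne P_1$ with $Q_2 \in T_{P_3}C$, and from $Q_3$ we obtain $P_2 \ne P_1$ with $Q_3 \in T_{P_2}C$. The key point to check is that $P_2 \ne P_3$; this is because $T_{P_2}C \ne T_{P_3}C$ (a total inflectional tangent determines its point of tangency as $T_PC \cap C$), so if the two tangent lines were equal the points would coincide — equivalently, since $T_{P_2}C$ and $T_{P_3}C$ both pass through $P_1$? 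No: rather $P_2, P_3$ lie on $T_{P_1}C$? That is the subtlety. Instead I would argue: $P_2 \in T_{Q_3}$-construction means $P_2$ is the second total ramification point of $\hat{\pi}_{Q_3}$, hence $P_2 \notin T_{P_1}C$ unless $P_2 = P_1$; since $P_2 \ne P_1$ we get $P_2 \notin T_{P_1}C$, and similarly $P_3 \notin T_{P_1}C$, whereas if $P_2 = P_3$ then $T_{P_2}C$ contains both $Q_2$ and $Q_3$, forcing $T_{P_2}C = \overline{Q_2Q_3} = T_{P_1}C$, contradicting $P_2 \notin T_{P_1}C$. So $P_1, P_2, P_3$ are three distinct smooth $\Bbb F_q$-points and $T_{P_1}C = \overline{Q_2Q_3}$.

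It remains to set $Q_1 := T_{P_2}C \cap T_{P_3}C$, which is $\Bbb F_q$-rational, and to verify the symmetric incidences $T_{P_2}C = \overline{Q_1Q_3}$, $T_{P_3}C = \overline{Q_1Q_2}$ together with $Q_1 \notin C$ and the non-collinearity of $Q_1, Q_2, Q_3$. The incidences $Q_3 \in T_{P_2}C$ and $Q_2 \in T_{P_3}C$ are built in by construction, and $Q_1 \in T_{P_2}C \cap T_{P_3}C$ by definition, which gives all three line identities once one knows $Q_1 \ne Q_2, Q_3$; that follows since $Q_2, Q_3 \in T_{P_1}C$ while $Q_1 \notin T_{P_1}C$ (because $Q_1 \in T_{P_2}C$ and $T_{P_2}C \cap T_{P_1}C$ is the single point $P_1 \notin \{Q_1\}$, as $Q_1 \notin C$). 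For $Q_1 \notin C$: if $Q_1 \in C$ it would be a smooth $\Bbb F_q$-point by Lemma \ref{singularity} (its multiplicity computed via projection from an $\Bbb F_q$-point of $T_{P_2}C$), hence a total inflection, but it lies on two tangent lines $T_{P_2}C, T_{P_3}C$ distinct from $T_{Q_1}C$, forcing intersection multiplicity contradictions with Lemma \ref{total flex}. Non-collinearity of $Q_1, Q_2, Q_3$ then follows: $Q_2, Q_3$ determine the line $T_{P_1}C$, and $Q_1 \notin T_{P_1}C$. I expect the main obstacle to be the bookkeeping in the first paragraph — rigorously extracting $\Bbb F_q$-rationality of the second ramification point of $\hat{\pi}_Q$ and ensuring the resulting $P$ is distinct from $P_1$ and does not itself lie on $T_{P_1}C$ — since everything downstream is a clean incidence-geometry argument once that foothold is secure.
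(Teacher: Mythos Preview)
Your overall strategy is sound, but it is considerably more elaborate than the paper's, and it contains two bookkeeping slips that you should fix.

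The paper's proof is almost a one-liner once Lemma~\ref{rational point} is in hand: that lemma already gives $\sharp C_{\rm sm}(\Bbb F_q)\ge q+1\ge 3$, so one simply \emph{chooses} any three distinct points $P_1,P_2,P_3\in C_{\rm sm}(\Bbb F_q)$ and then \emph{defines} $Q_i:=T_{P_j}C\cap T_{P_k}C$. The only thing to check is that the three $Q_i$ are distinct, and this follows from Fact~\ref{rational-ramification}(1): a degree-$d$ Galois cover $\Bbb P^1\to\Bbb P^1$ with $p\nmid d$ has exactly two total ramification points, so no single point $Q$ can lie on all three tangent lines. Non-collinearity and $Q_i\notin C$ are then immediate (the latter because $T_{P_j}C\cap C=\{P_j\}$ by Lemma~\ref{total flex}). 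Your route instead starts from a single $P_1$, picks $Q_2,Q_3$ on $T_{P_1}C$, and then \emph{constructs} $P_2,P_3$ as second total ramification points of projections from $Q_3,Q_2$; this works, and the rationality argument for those second ramification points is fine, but it re-derives part of Lemma~\ref{rational point} and adds verification steps the paper avoids.

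Two concrete slips in your write-up: (i) you claim $T_{P_2}C\cap T_{P_1}C=\{P_1\}$, but $P_1\in C$ and $T_{P_2}C\cap C=\{P_2\}$, so $P_1\notin T_{P_2}C$; the actual intersection is $\{Q_3\}$, and the argument that $Q_1\notin T_{P_1}C$ should instead go: if $Q_1\in T_{P_1}C$ then $Q_1=Q_3$, whence $Q_2,Q_3\in T_{P_3}C\cap T_{P_1}C$ forces $T_{P_3}C=T_{P_1}C$, contradiction. (ii) Your argument for $Q_1\notin C$ via Lemma~\ref{singularity} is both incomplete (that lemma gives $\sharp\pi^{-1}(Q_1)=m(Q_1)$, not smoothness) and unnecessary: simply observe $Q_1\in T_{P_2}C$ and $T_{P_2}C\cap C=\{P_2\}$, so $Q_1\in C$ would force $Q_1=P_2$ and symmetrically $Q_1=P_3$, contradicting $P_2\ne P_3$.
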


\begin{proof}
By Lemma \ref{rational point}, $\sharp C_{\rm sm}(\Bbb F_q) \ge q+1 \ge 3$. 
We take three distinct points $P_1, P_2, P_3$ and intersection points $Q_i$ given by $T_{P_j}C \cap T_{P_k}$ for $\{i,j, k\}=\{1, 2, 3\}$. 
By considering the sum of the ramification indices of the Galois coverings $\Bbb P^1 \rightarrow \Bbb P^1$ with two total ramification points, $Q_i \ne Q_j$ if $i \ne j$. 
We have $T_{P_i}C=\overline{Q_jQ_k}$ for each $\{i,j,k\}=\{1, 2, 3\}$ and $Q_1, Q_2, Q_3$ are not collinear. 
\end{proof}

\begin{proof}[Proof of Theorem]
By Lemma \ref{distribution}, for a suitable system of coordinates, we may assume that $T_{P_1}C, T_{P_2}C, T_{P_3}C$ are defined by $X=0$, $Y=0$, $Z=0$ respectively. 
We denote by $\pi=(f(s, t):g(s, t):h(s, t))$, where $f, g, h \in \Bbb F_q[s,t]$ are homogeneous polynomials of degree $d$. 
Since $C \cap \{X=0\}=\{P_1\}$, $f(s,1)=0$ has a unique solution $s=\alpha \in \Bbb F_q$. 
Therefore, $f(s, 1)=a(s-\alpha)^d$ for some $a \in \Bbb F_q$. 
For a suitable system of coordinates, we can take $f(s,1)=s^d$. 
Similarly, we can take $h(1, t)=t^d$ and $g(1, t)=(1+t)^d$. 
Therefore, $\pi$ is represented by $$(s:t) \mapsto (s^d:(s+t)^d:t^d).$$  

Now $P_3=\pi(1:0)=(1:1:0)$.
We consider the projection $\pi_{P_3}$. 
Since $\pi_{P_3}$ is represented by $(Y-X:Z)$, we have 
$$\hat{\pi}_{P_3}(s:1)=((s+1)^d-s^d:1). $$
Let $f(s)=(s+1)^d-s^d$. 
Assume that $p$ does not divide $d-1$. 
By Lemma \ref{total flex}, Facts \ref{index}(1) and \ref{rational-ramification}(1), $\pi_{P_3}$ has exactly two total ramification points and $f'(s)=0$ has a unique solution. 
However, $f'(s)=d(s+1)^{d-1}-ds^{d-1}=0$ implies $((1+s)/s)^{d-1}=1$. 
Then, we have $d-2$ solutions. 
This is a contradiction.
Therefore, $d-1$ is divisible by $p$. 

Let $d-1=q'\ell$, where $q'$ is a power of $p$ and $\ell$ is not divisible by $p$. 
Then, $\ell$ divides $q'-1$ by Fact \ref{rational-ramification}(2). 
Since the rank of the matrix
$$\left(\begin{array}{ccc}
1 & (t+1)^d & t^d \\
0 & d(t+1)^{d-1} & dt^{d-1} 
\end{array}\right) 
\sim
\left(\begin{array}{ccc}
1 & (t+1)^{d-1} & 0 \\
0 & (t+1)^{d-1} & t^{d-1} 
\end{array}\right) $$
is always two, for each point $\hat{P} \in \hat{C}$, we can define the tangent line $T_{\hat{P}}C$ which is given by 
$$ h_{\hat{P}}(X, Y, Z):=t^{d-1}(t+1)^{d-1}X-t^{d-1}Y+(t+1)^{d-1}Z=0. $$
Let $\hat{P}=(1:t_0)$ and let $u=t-t_0$. 
Since 
\begin{eqnarray*}
(1+t)^d & =& ((1+t_0)+u)^{q'\ell}((1+t_0)+u) \\
&=& ((1+t_0)^{q'}+u^{q'})^\ell((1+t_0)+u) \\
&=& (1+t_0)^d+(1+t_0)^{d-1}u+(\mbox{sum of terms of deg.} \ge q' \mbox{ on } u)
\end{eqnarray*} 
and 
$$ t^d=t_0^d+t_0^{d-1}u+(\mbox{sum of terms of deg.} \ge q' \mbox{ on } u), $$
we have
$$ \pi^*h_{\hat{P}}(1, y, z)=(\mbox{sum of terms of deg.} \ge q' \mbox{ on } u). $$
Therefore, ${\rm ord}_{\hat{P}}\pi^*h_{\hat{P}} \ge q'$ for each point $\hat{P} \in \hat{C}$, where $h_{\hat{P}}$ is a linear polynomial defining $T_{\hat{P}}C$.  
If $\ell \ge 2$, by Fact \ref{rational-ramification}(2), there exists a point $\hat{Q} \in \hat{C}$ such that ${\rm ord}_{\hat{Q}}\pi^*h_{\hat{Q}}=\ell$. 
Then, we have $q'>\ell \ge q'$. 
This is a contradiction.
We have $\ell=1$ and $q'=d-1$.  
By Fact \ref{examples}(3), $q'=q$. 

The if-part is nothing but Fact \ref{examples}(3). 
\end{proof}

\end{document}